\journal{MATRIX Annals}
\newcommand{\R}{{\Bbb R}}
\definecolor{alizarin}{rgb}{0.82, 0.1, 0.26}
\definecolor{burgundy}{rgb}{0.5, 0.0, 0.13}
\definecolor{darkblue}{rgb}{0.0, 0.0, 0.55}
\definecolor{deepcarmine}{rgb}{0.66, 0.13, 0.24}
\definecolor{navyblue}{rgb}{0.0, 0.0, 0.5}
\newtheorem{thm}{Theorem}
\newtheorem{lem}[thm]{Lemma}
\newtheorem{cor}[thm]{Corollary}
\newtheorem{remark}[thm]{Remark}
\newproof{proof}{Proof}
\begin{document}
\begin{frontmatter}

\title{On the global dynamics of a forest model with monotone positive feedback and memory}

\author[a]{Franco Herrera}
\author[a]{Sergei Trofimchuk\corref{mycorrespondingauthor}}
\cortext[mycorrespondingauthor]{\hspace{-7mm} {\it e-mails addresses}:   franco.herrera@utalca.cl (F. Herrera); trofimch@inst-mat.utalca.cl (S. Trofimchuk, corresponding author)  \\}
\address[a]{Instituto de Matem\'aticas, Universidad de Talca, Casilla 747, Talca, Chile}

\bigskip

\begin{abstract}
\noindent  We continue to study (see \cite{HTa}) a renewal equation  $\phi(t)=\frak F\phi_t$ proposed in \cite{FBCD, FBCD1} to model  trees growth. This time we are 
considering the case when the per capita reproduction rate $\beta(x)$ is a non-monotone (unimodal) function  of tree's height $x$. Note that  
 the height of some species of trees can impact negatively seed viability, cf. \cite[p. 524]{CO}, in a kind of autogamy depression. 
Similarly to previous works, it is also assumed that the growth rate  $g(x)$ of an individual of height $x$ is a strictly decreasing function.   
 As in \cite{HT,HTa}, we analyse the connection between dynamics of the associated one-dimensional map $F(b)= \frak Fb,$ $b \in \R_+$, and the delayed (hence infinite-dimensional) model $\phi(t)=\frak F\phi_t$.  
 Our key observation is that this model is of monotone positive feedback type since $F$ is strictly increasing on $\R_+$ independently on the monotonicity properties of $\beta$. 
 \end{abstract}
\begin{keyword} size-structured model, semiflow, global attractor, stability, Allee effect \\
%{\it 2020 Mathematics Subject Classification}: {\ }
\end{keyword}

\end{frontmatter}

\newpage

\section{Introduction}  \label{Ex1} 

\noindent In this note, we consider the renewal equation 
\begin{equation}\label{RE}
b(t) =\frak Fb_t:= \int_0^\infty \hspace{-2mm}\beta\left( x_m + \int_0^a \hspace{-1mm} g\left( e^{-\mu(\tau -a)} \int_a^\infty\hspace{-1mm}  e^{-\mu s} b(t-s)\,ds \right) d\tau \right) e^{-\mu a} b(t-a)da,
\end{equation}
where  $b_t(s):=b(t+s), \ s \leq 0, $ and functional  ${\frak F}$ is given by 
 $$
{\frak F}\phi:=\int_0^\infty \beta\left( x_m + \int_0^a g\left( e^{-\mu(\tau -a)} \int_a^\infty e^{-\mu s} \phi(-s)\,ds \right) d\tau \right) e^{-\mu a} \phi(-a)da, $$ This equation was proposed recently in \cite{FBCD, FBCD1} to model  trees growth and presents one of possible approaches to analyse the forest population dynamics, several other  models are mentioned in \cite{FBCD, FBCD1,HTa}.  In equation (\ref{RE}), function  $b(t)$ corresponds to the tree population birth rate at time $t$; continuous strictly decreasing function $g(x)$, $g: [0,+\infty)\to (0, +\infty)$, $g(+\infty)=0$, is the growth rate  of an individual of height $x$;   $\mu >0$ represents the per capita death rate;  globally Lipschitzian function $\beta(x)$, $\beta:[x_m,+\infty)\to [0, +\infty)$, is the per capita reproduction rate (depending only on height $x$).  This model postulates the minimal height $x_m \geq 0$ for all newborn trees. By  \cite{FBCD, FBCD1}, $b(t)$ gives the influx at $x_m$ originating from reproduction by the extant population. To ensure good persistence properties of solutions, it suffices to assume that $\beta(x) > 0$ a.e. on $[x_0,+\infty)$  for some $x_0\geq x_m$ \cite{HTa}.

Clearly, (\ref{RE}) can be considered as an autonomous functional equation with infinite delay  
once  it is  provided with non-negative initial conditions 
$b(s)=\phi(s) \geq 0, \ s \leq 0$, where $\phi$ belongs to the cone $\frak K $ of non-negative measurable functions of the Banach space 
$$
L^1_\rho(\R_{-})= \{\phi: |\phi|_\rho < \infty\}, \quad \mbox{(here we use the notation} \quad |\phi|_q:= \int_{-\infty}^0|\phi(s)|e^{qs}ds),
$$
with $\rho < \mu$. It was proved in \cite{FBCD, FBCD1,HTa} that 
the renewal equation  (\ref{RE}) generates a continuous semiflow 
\mbox{$\frak S: \frak K \times \R_+ \to \frak K $} by  $(\frak S^t \phi)(s) = b(t+s),$ $s \leq 0$. It has a zero 
steady state $b(t) \equiv 0$ while  the positive equilibria of $\frak S^t $ are defined from the equation 
 \begin{equation}\label{ERE}
1= R(b):=\int_0^\infty \beta\left( x_m + \int_0^a g\left( \frac{e^{-\mu\tau}}{\mu}b\right) d\tau \right) e^{-\mu a}da. 
\end{equation}
Note that  equation (\ref{ERE})  has at least one  positive solution $b_*$ if  
$R(0) >1$ and $R(+\infty)= \beta(x_m)/\mu <1$.  If additionally  $\beta$ increases on $\R_+$, then $R(b)$ is strictly decreasing so that solution $b_*$ is unique, cf.  \cite[Theorem 4.1]{FBCD}.  Moreover, this unique positive steady state is locally stable and globally attracting solution of (\ref{RE}), see  
\cite{FBCD, FBCD1,HTa}. The key argument in proofs here is that semiflow $\frak F^t \phi$ is monotone  \cite{Smith} once  $\beta$ is increasing on $\R_+$.

In some cases, however, the assumption of monotonicity of $\beta$ is not realistic. For example,  the height of some species of trees can impact negatively seed viability, cf. \cite[p. 524]{CO}, in a kind of autogamy depression. Thus it would be interesting to study the dynamics in (\ref{RE}) in the case when 
 the per capita reproduction rate $\beta(x)$ is a non-monotone  (for instance, unimodal) function  of tree's height $x$. Now, a general description of this dynamics is known from  \cite{HTa}, where it was proven that under all assumed (except monotonicity of $\beta$) restrictions on $\beta$, $g$ and 
 $0 \leq  \beta(x_m)< \mu$,  $R(0) >1$,  the semiflow  $\frak F^t \phi$ has a compact global attractor $\mathcal A$ 
attracting each non-zero solution. Moreover, each element $\psi$  of $\mathcal A$ satisfies 
$
0 <\theta_1 \leq \psi(s) \leq \theta_2, \quad |\psi'(s)| \leq \theta_3, \ s \leq 0,
$
for some  universal positive numbers $\theta_1 \leq \theta_2, \theta_3$. 

In the sequel we present several analytical and numerical arguments shedding some light on possible dynamical structures of the global attractor $\mathcal A$ without assumption of the monotonicity for $\beta$. As in \cite{HT,HTa}, we will analyse the connection between dynamics of the associated one-dimensional map $F(b)= \frak Fb,$ $b \in \R_+$, and the delayed (hence infinite-dimensional) model $\phi(t)=\frak F\phi_t$.

\section{Monotonicity of the function $F(b)= bR(b)$,  $b \in \R_+$. }  \label{Ex1} 
 \begin{thm}\label{Th1}
Assume all conditions  (except monotonicity of $\beta$) imposed on $\beta$ and $g$ in Section 1. Then function $F(b)= bR(b)$ 
is strictly increasing on $\R_+$.  
\end{thm}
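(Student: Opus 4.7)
The plan is to rewrite $F(b)=bR(b)$ as a single integral over the height variable $x$ rather than over age $a$, in a form that makes monotonicity in $b$ completely transparent. With $H(a,b)=x_m+\int_0^a g(e^{-\mu\tau}b/\mu)\,d\tau$, substitute $W=e^{-\mu a}b/\mu$ in the outer integral of $F(b)=b\int_0^\infty \beta(H(a,b))e^{-\mu a}\,da$, and perform the analogous substitution in the inner $\tau$-integral. After passing to $x=H(a,b)$, one arrives at
\[
F(b)=\int_{x_m}^{\infty}\beta(x)\,\frac{\mu\,W(x,b)}{g(W(x,b))}\,dx,
\]
where $W(x,b)$ is characterised by $x-x_m=\int_{W(x,b)}^{b/\mu}\frac{g(u)}{\mu u}\,du$, or equivalently as the solution of the autonomous ODE $dW/dx=-\mu W/g(W)$ with $W(x_m,b)=b/\mu$. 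Because $g(0)>0$ makes $\int_0 g(u)/u\,du$ diverge, the height variable does range over the whole of $[x_m,\infty)$.

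Two monotonicity facts then do the work. First, since the ODE for $W$ is autonomous in $x$, orbits issuing from $b_1/\mu<b_2/\mu$ cannot cross, so $W(x,b_1)<W(x,b_2)$ for every $x\ge x_m$; equivalently, the implicit relation above forces $W$ to increase with $b$, because an increase of the upper limit must be compensated by an increase of the lower limit to keep the integral equal to $x-x_m$. Second, since $g$ is strictly decreasing and positive, both $W$ and $1/g(W)$ are positive and strictly increasing in $W$, hence the map $W\mapsto \mu W/g(W)$ is strictly increasing on $(0,\infty)$. Composing these, the integrand $\beta(x)\,\mu W(x,b)/g(W(x,b))$ is pointwise non-decreasing in $b$, and strictly increasing at every $x$ with $\beta(x)>0$.

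For $b_1<b_2$ this gives
\[
F(b_2)-F(b_1)=\int_{x_m}^{\infty}\beta(x)\left[\frac{\mu W(x,b_2)}{g(W(x,b_2))}-\frac{\mu W(x,b_1)}{g(W(x,b_1))}\right]dx\ge 0.
\]
To upgrade this to strict inequality I would invoke the standing assumption that $\beta>0$ almost everywhere on $[x_0,\infty)$ for some $x_0\ge x_m$; together with the continuity of $\beta$ (it is globally Lipschitz), this forces $\beta$ to be strictly positive on a non-empty open subset of $[x_0,\infty)$, on which the bracket is strictly positive, and so the integral is strictly positive. The main technical obstacle is the change of variables $(a,\tau)\mapsto (x,W)$ that produces the closed-form representation of $F(b)$; once that step is carefully executed, strict monotonicity follows from nothing more than the order-preserving nature of autonomous scalar flows together with the elementary monotonicity of $W\mapsto W/g(W)$.
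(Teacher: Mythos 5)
Your proposal is correct and takes essentially the same route as the paper: after the change of variables from age to height, the paper arrives at exactly your representation $F(b)=\mu\int_{x_m}^{\infty}\beta(x)\,\frac{W(x,b)}{g(W(x,b))}\,dx$ (its $\sigma(u,b)$ is your $W(x,b)$), and then uses the same two facts, monotonicity of $W$ in $b$ and of $W\mapsto W/g(W)$. Your explicit characterization of $W$ via the autonomous ODE / implicit integral relation is just a concrete (and clean) version of the paper's inversion of $\gamma(\theta,\phi)$ specialized to constant data, which is all the theorem requires.
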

\begin{proof} Assuming that $\phi(s) >0$ a.e. on $\R_-$ and setting  $h(s)=\phi(s)e^{\mu s}$ and $\theta = H(u)=\int_{-\infty}^uh(s)ds$,  we find that $$
{\frak F}\phi=\int_{-\infty}^0 \beta\left( x_m + \int^0_u g( e^{-\mu s}H(u))ds\right)h(u)du =
\int_{0}^{H(0)}\beta\left( x_m + \int^0_{H^{-1}(\theta)} g( e^{-\mu s}\theta)ds\right)d\theta. 
$$
Clearly,  $H(u)$ is strictly increasing and positive on $\R$,  with $H(-\infty)=0$, 
$$
\frac{d}{d\theta}H^{-1}(\theta)= \frac{1}{h(H^{-1}(\theta))} \quad \mbox{a.e. on}\ (0, H(0)). 
$$  
If $\phi_2(s) > \phi_1(s) >0$, almost everywhere on $\R_-$, then 
$$
H_2(u)=\int_{-\infty}^uh_2(s)ds > H_1(u)=\int_{-\infty}^uh_1(s)ds, \quad u \leq 0, 
$$
so that 
$$
H_2^{-1}(\theta) < H_1^{-1}(\theta) \leq 0,    \quad 0 <\theta \leq H_1(0).  
$$
Since $g$ is strictly decreasing, we find that the function 
$$\gamma(\theta, \phi):=x_m + \int^0_{H^{-1}(\theta)} g( e^{-\mu s}\theta)ds =x_m + \int^{\mu^{-1}\ln(1/\theta)}_{H^{-1}(\theta)+\mu^{-1}\ln(1/\theta)} g( e^{-\mu s})ds,$$
 is continuous and strictly decreasing in $\theta$ for each fixed $\phi >0$, 
$$\partial\gamma(\theta, \phi)/\partial\theta= -\frac 1\mu \left(\frac{g(\theta)}{\theta} - g(\theta e^{-\mu H^{-1}(\theta)})\left[\frac{1}{\theta}- \frac{\mu}{h(H^{-1}(\theta))}\right]  \right) <0.$$
Also, for a fixed $\theta$ and $\phi_2(s) > \phi_1(s) >0$, it holds that 
$$\gamma(\theta, \phi_2)> \gamma(\theta, \phi_1).$$
 Thus the equation $u=\gamma(\theta, \phi)$ can be solved with respect to $\theta$, we denote this solution by $\theta=\sigma(u,\phi)$.  Clearly, $\sigma$ is strictly decreasing in the first argument and  is strictly increasing in $\phi$. Therefore $g(\sigma(u,\phi))/\sigma(u,\phi)$ is strictly decreasing in $\phi$
 
 In this way, 
$$ {\frak F}\phi = \mu \int_{x_m}^{+\infty}\beta(u) \left(\frac{g(\theta)}{\theta} - g(\theta e^{-\mu H^{-1}(\theta)})\left[\frac{1}{\theta}- \frac{\mu}{h(H^{-1}(\theta))}\right]\right)^{-1}du, \ \mbox{where}\ \theta=\sigma(u,\phi). 
$$
Particularly, 
$$
F(b)= {\frak F}\phi = \mu \int_{x_m}^{+\infty}\beta(u)\frac{\theta}{g(\theta)}du, \ \mbox{where}\ \theta=\sigma(u,b). 
$$
so that $F(b)$ is a strictly increasing function independently on the monotonicity properties of $\beta$. 
\qed
\end{proof}
Theorem \ref{Th1} suggests to explore a possible connection between dynamical structures for the delayed equation (\ref{RE}) and seemingly simpler equation 
\begin{equation}\label{PF}
b(t)= {\frak G}b_t:= \int_0^\infty f\left(b(t-a-h)\right) e^{-\mu a}da,
\end{equation}
where $F= \mu^{-1}f:\R\to \R$ is a strictly increasing smooth function, $f'(x)>0$ for all $x \in \R$,  having three equilibria  $e_1 < e_2:=0 < e_3$, $F(e_j)= e_j$.  Equation (\ref{PF}) describes bounded solutions from the global attractor for the semiflow generated by the delayed equation 
$$
b'(t)= -\mu b(t)+ f(b(t-h)), \quad \mu >0. 
$$
Specific applications of  such a model can be found  in neural network theory \cite{KWW,KW}. Obviously, each equilibrium for  (\ref{PF}) is determined from the equation 
$$b= {\frak G}b = \mu ^{-1}f(b),
$$
where the right hand side is given by a strictly increasing function. The global attractor $\mathcal A$
 for (\ref{PF}) has relatively simple structure described in \cite{KW} (if $e_2$ is "sufficiently unstable", then $\mathcal A$ is 
a smooth solid 3-dimensional spindle separated by the invariant disk into the basins of attraction toward the tips $e_1$ and $e_3$). See also \cite{KV}
for further results  and references on this topic.

\section{Characteristic equations and principle of linearized stability}  \label{Char1}
Formal linearization of the operator ${\frak F}$ at an equilibrium $b(t) \equiv b$ yields the following action 
$$
{\frak F}'(b)\psi=\int_0^\infty \beta\left( x_m + \int_0^a g\left( \frac{e^{-\mu\tau}b}{\mu} \right) d\tau \right) e^{-\mu a} \psi(-a)da+
$$
 $$
 \int_0^\infty \beta'\left( x_m + \int_0^a g\left( \frac{e^{-\mu\tau}b}{\mu} \right) d\tau \right)\left[g\left( \frac{b}{\mu} \right)- g\left( \frac{e^{-\mu a}b}{\mu} \right)\right]\int_a^\infty e^{-\mu s} \psi(-s)\,ds da=
 $$
  $$\beta(x_m)\int_0^\infty e^{-\mu s} \psi(-s)\,ds+   g\left( \frac{b}{\mu} \right) \int_0^\infty \beta'\left( x_m + \int_0^a g\left( \frac{e^{-\mu\tau}b}{\mu} \right) d\tau \right)\int_a^\infty e^{-\mu s} \psi(-s)\,ds da. 
 $$
 Looking for the exponential eigenfunctions $\psi(s) = \exp(\lambda s)$ of the linearized equation $\psi(t) = {\frak F}'(b)\psi_t$, we obtain the characteristic equation for $\lambda$, cf.  \cite{FBCD, FBCD1}: 
 $$\lambda +(\mu - \beta(x_m))=  g\left( \frac{b}{\mu} \right) \int_0^\infty \beta'\left( x_m + \int_0^a g\left( \frac{e^{-\mu\tau}b}{\mu} \right) d\tau \right)e^{-(\mu+\lambda)a}da =: \chi_b(\lambda). 
 $$
 %or for $z =\lambda+\mu$
% $$z-\beta(x_m)= g\left( \frac{b}{\mu} \right) \int_0^\infty \beta'\left( x_m + \int_0^a g\left( \frac{e^{-\mu\tau}b}{\mu} \right) d\tau \right){e^{-z a}}
 %da =: \zeta_b(z). 
 %$$
 First, we consider the simpler case when $\beta$ is a non-decreasing function and  $\mu> \beta(x_m)$ (so that $F'(b) \leq 1$ at the unique positive equilibrium): 
 \begin{lem} \label{L8} Assume that $\beta'(x_m+s) \geq 0$, $g(s) >0$, for $s \geq 0$ and $\mu - \beta(x_m) >0$. Then the characteristic equation has exactly one real solution $\lambda=\lambda_0$. Moreover, $\lambda_0\leq 0$ and $\Re\lambda_j < \lambda_0$ for each other (complex) solution $\lambda_j$. If $g$ is strictly decreasing on $\R_+$, then $\lambda_0<0$. 
 \end{lem}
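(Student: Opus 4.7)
The plan is to rewrite the characteristic equation as $\Phi(\lambda)=0$ where $\Phi(\lambda):=\lambda+C-\chi_b(\lambda)$ and $C:=\mu-\beta(x_m)>0$. Setting $A(a):=x_m+\int_0^a g(e^{-\mu\tau}b/\mu)\,d\tau$ and $K(a):=\beta'(A(a))\ge 0$, the function
$$\chi_b(\lambda)=g(b/\mu)\int_0^\infty K(a)\,e^{-(\mu+\lambda)a}\,da$$
is a positive multiple of the Laplace transform of the non-negative bounded kernel $K$. On the real half-line $\lambda>-\mu$ it is therefore real, non-negative, smooth and strictly decreasing---provided $K\not\equiv 0$, which at the positive equilibrium is automatic, since $K\equiv 0$ would give $\mu R(b)=\beta(x_m)<\mu$, contradicting $R(b)=1$. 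Consequently $\Phi$ is strictly increasing on $(-\mu,+\infty)$, so the characteristic equation has at most one real root.

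The crux will be to show $\Phi(0)\ge 0$, for then $\Phi(+\infty)=+\infty$ together with $\Phi(\lambda)\to -\beta(x_m)-\chi_b(-\mu^+)<0$ as $\lambda\to -\mu^+$ (by monotone convergence, using $K\not\equiv 0$) and the intermediate value theorem deliver a unique real root $\lambda_0\le 0$. To establish this I would integrate by parts in
$$R(b)=\int_0^\infty \beta(A(a))\,e^{-\mu a}\,da,$$
using $A'(a)=g(e^{-\mu a}b/\mu)=:p(a)$ and the decay $e^{-\mu a}\beta(A(a))\to 0$ at infinity (which follows from the at-most-linear growth of $A$ and the Lipschitz bound on $\beta$). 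This yields
$$\mu R(b)-\beta(x_m)=\int_0^\infty \beta'(A(a))\,p(a)\,e^{-\mu a}\,da.$$
Since $g$ is non-increasing on $\R_+$ and $e^{-\mu a}b/\mu\le b/\mu$, one has $p(a)\ge p(0)=g(b/\mu)$, whence
$$\mu R(b)-\beta(x_m)\ge g(b/\mu)\int_0^\infty \beta'(A(a))\,e^{-\mu a}\,da=\chi_b(0).$$
Evaluating at the equilibrium ($R(b)=1$) gives $\chi_b(0)\le C$, i.e.\ $\Phi(0)\ge 0$, so $\lambda_0\le 0$. If $g$ is strictly decreasing and $b>0$, then $p(a)>p(0)$ strictly for every $a>0$, and since $K>0$ on a set of positive measure one obtains $\chi_b(0)<C$, i.e.\ $\Phi(0)>0$, forcing $\lambda_0<0$.

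For the dominance assertion, take any other root $\lambda_j=\sigma+i\tau$. Because there is only one real root, $\tau\ne 0$; bounding moduli in the characteristic equation gives
$$|\lambda_j+C|=|\chi_b(\lambda_j)|\le g(b/\mu)\int_0^\infty K(a)\,e^{-(\mu+\sigma)a}\,da=\chi_b(\sigma),$$
with \emph{strict} inequality, as $K\not\equiv 0$ on a set of positive measure on which the phase $e^{-i\tau a}$ is non-constant. Note $\lambda_0+C=\chi_b(\lambda_0)\ge 0$. If $\sigma+C<0$, then $\sigma<-C\le\lambda_0$ trivially; otherwise $\sigma+C\le |\lambda_j+C|<\chi_b(\sigma)$, i.e.\ $\Phi(\sigma)<0=\Phi(\lambda_0)$, and strict monotonicity of $\Phi$ yields $\sigma<\lambda_0$.

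The main obstacle I anticipate is the integration-by-parts identity linking $\chi_b(0)$ to $R(b)$: once this is in place, everything else is a routine deployment of monotonicity and standard Laplace-transform estimates. Verifying the vanishing of the boundary term at $a=\infty$ and the non-triviality of $K$ at the positive equilibrium are minor issues that fall out of $R(b)=1$ together with $\mu>\beta(x_m)$.
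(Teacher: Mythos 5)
Your proof is correct and follows essentially the same route as the paper: existence and uniqueness of the real root from the opposite monotonicity of $\lambda+\mu-\beta(x_m)$ and $\chi_b(\lambda)$ together with $\chi_b(0)\leq \mu-\beta(x_m)$, dominance of $\lambda_0$ by a modulus estimate, and strict negativity from the strict decrease of $g$. The only cosmetic difference is that you justify $\chi_b(0)\leq\mu-\beta(x_m)$ directly by integration by parts and $R(b)=1$, whereas the paper cites $\chi_b(0)=\mu F'(b)-\beta(x_m)$ with $F'(b)\leq 1$ at the unique positive equilibrium --- the same computation it performs in its final step.
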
 
 \begin{proof} The proof is obvious if $\beta'(s) = 0$ a.e. so that we consider the case when $\beta'\not= 0$. Clearly each eigenvalues satisfies the inequality $\lambda_j +(\mu - \beta(x_m)) \geq 0$. Taking into account  opposite types of monotonicity of $\lambda +(\mu - \beta(x_m))$ and $\chi_b(\lambda)$ and the fact that $\chi_b(\lambda) >0$ and $\chi_b(0)= \mu F'(b)-  \beta(x_m)\leq \mu -  \beta(x_m)$, we deduce the existence of a unique real (non-positive) solution $\lambda_0$. 
 
 Assuming that there is another eigenvalue $\lambda_j$ with $\Re \lambda_j \geq \lambda_0$, we get a contradiction: 
 $$\lambda_0 +(\mu - \beta(x_m)) = |\lambda_0 +(\mu - \beta(x_m))| < |\lambda_j +(\mu - \beta(x_m))|\leq   
 $$
 $$
 g\left( \frac{b}{\mu} \right) \int_0^\infty \beta'\left( x_m + \int_0^a g\left( \frac{e^{-\mu\tau}b}{\mu} \right) d\tau \right)e^{-(\mu+\Re\lambda_j)a}da \leq 
 $$
$$ g\left( \frac{b}{\mu} \right) \int_0^\infty \beta'\left( x_m + \int_0^a g\left( \frac{e^{-\mu\tau}b}{\mu} \right) d\tau \right)e^{-(\mu+\lambda_0a)}da =\lambda_0 +(\mu - \beta(x_m)).
$$
Finally, suppose that $g$ is a strictly decreasing and $\lambda_0=0$. 
Then we get the following contradiction: 
$$\mu - \beta(x_m)=   \int_0^\infty \beta'\left( x_m + \int_0^a g\left( \frac{e^{-\mu\tau}b}{\mu} \right) d\tau \right)g\left( \frac{b}{\mu} \right)e^{-\mu a}da < $$
$$ \int_0^\infty \beta'\left( x_m + \int_0^a g\left( \frac{e^{-\mu\tau}b}{\mu} \right) d\tau \right)g\left( \frac{e^{-\mu a}b}{\mu} \right)e^{-\mu a}da =$$
$$
- \beta(x_m) + \mu  \int_0^\infty \beta\left( x_m + \int_0^a g\left( \frac{e^{-\mu\tau}b}{\mu} \right) d\tau \right)e^{-\mu a}da = - \beta(x_m) + \mu.  \quad \Box
$$
 \end{proof}
\begin{remark} Lemma \ref{L8}  simplifies and improves arguments in   \cite[Section 5]{FBCD} (apparently,  the uniqueness statement of this lemma  contradicts to the existence of two negative eigenvalues in the particular case presented in \cite[Subsection 5.1]{FBCD}; 
however,  one of these eigenvalues is false and should be excluded to allow the convergence of integrals in $\chi_b(\lambda)$. 
\end{remark}
 Next, we analyse  the situation when $\beta$ is not necessarily  monotone function and  $\mu> \beta(x_m)$ (we assume that the model possesses at least one positive equilibrium $b$): 
\begin{lem} \label{L9} Assume that  $g>0$ is strictly decreasing Lipschitz continuous function on $\R_+$, $\mu > \beta(x_m)$. Then in the half-plane $\{\Re z \geq - \mu \}$ the characteristic equation has exactly one real solution $\lambda=\lambda_0$ and $\Re\lambda_j < \lambda_0$ for each other (complex) solution $\lambda_j$. Moreover, if $F'(b) < 1$ then $\lambda_0< 0$, $\lambda_0 = 0$ when $F'(b)  = 1$ and  $\lambda_0>0$   if $F'(b) > 1$. 
 \end{lem}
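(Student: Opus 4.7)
The plan is to eliminate the sign-changing factor $\beta'$ from the characteristic equation by a single integration by parts, producing a positive-kernel representation of Laplace type that can be analysed by methods analogous to Lemma~\ref{L8}. With $G_b(a)=x_m+\int_0^a g(e^{-\mu\tau}b/\mu)d\tau$ and $\theta(a)=e^{-\mu a}b/\mu$, note the identity $\beta'(G_b(a))g(\theta(a))=\tfrac{d}{da}\beta(G_b(a))$. Integration by parts in $\chi_b(\lambda)$ with $u=e^{-(\mu+\lambda)a}/g(\theta(a))$ and $dv=d[\beta(G_b(a))]$ converts the characteristic equation into the equivalent form
\[
s=\Phi(s):=\int_0^\infty\omega(a)e^{-sa}\bigl(s+\eta(a)\bigr)da,\qquad s:=\mu+\lambda,
\]
with $\omega(a):=g(b/\mu)\beta(G_b(a))/g(\theta(a))\ge 0$ and $\eta(a):=-\mu\theta(a)g'(\theta(a))/g(\theta(a))\ge 0$. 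The boundary term at infinity vanishes for $\Re\lambda>-\mu$ since $\beta(G_b(a))/g(\theta(a))$ grows at most linearly while $e^{-(\mu+\lambda)a}$ decays exponentially. Differentiating $F(b)=bR(b)$ and integrating $R$ once by parts yields the auxiliary identity $\mu F'(b)=\beta(x_m)+\chi_b(0)$, which at $s=\mu$ gives the central link $\Phi(\mu)=\mu F'(b)$.

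The next step is to prove that $\Phi(s)/s$ is strictly decreasing on $(0,\infty)$. A direct computation yields
\[
s\Phi'(s)-\Phi(s)=-\int_0^\infty\omega(a)e^{-sa}\bigl[as\bigl(s+\eta(a)\bigr)+\eta(a)\bigr]da,
\]
which is strictly negative thanks to $\omega,\eta\ge 0$ together with the non-degeneracy $\omega\eta\not\equiv 0$ (ensured by $\beta\not\equiv 0$ on $(x_0,+\infty)$ and $g$ strictly decreasing Lipschitz, so $g'<0$ on a set of positive measure). Since $\Phi(s)/s\to+\infty$ as $s\to 0^+$ (driven by $V(s):=\int\omega\eta\, e^{-sa}da\to V(0)>0$) and $\Phi(s)/s\to 0$ as $s\to+\infty$, the equation $\Phi(s)=s$ admits exactly one real root $s_0\in(0,\infty)$. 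Comparing $\Phi(\mu)/\mu=F'(b)$ with $1$ through the strict decrease of $\Phi/s$ then delivers the announced trichotomy: $\lambda_0:=s_0-\mu$ has the sign of $F'(b)-1$.

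For a complex solution $s_j=\sigma+i\tau$ with $\sigma\ge 0$, the triangle inequality applied to $s_j=\int\omega e^{-s_j a}(s_j+\eta)da$ yields $|s_j|\le|s_j|W(\sigma)+V(\sigma)$ with $W(s):=\int\omega e^{-sa}da$. The identity $s_0=s_0W(s_0)+V(s_0)$ together with $V(s_0)>0$ gives $W(s_0)<1$, hence $W(\sigma)<1$ for every $\sigma\ge s_0$, so $|s_j|\le V(\sigma)/(1-W(\sigma))$. Strict decrease of $\Phi/s$ forces $V(\sigma)/(1-W(\sigma))\le\sigma$ for $\sigma\ge s_0$, with equality only at $\sigma=s_0$. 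Chaining $|s_j|\le V(\sigma)/(1-W(\sigma))\le\sigma\le|s_j|$ forces equality throughout, hence equality in the triangle inequality, so $\tau=0$ and $s_j=s_0$. Consequently every complex root satisfies $\Re\lambda_j<\lambda_0$. The only substantive obstacle I foresee is the justification of the integration by parts when $\beta$ and $g$ are merely Lipschitz; since both $\beta(G_b(\cdot))$ and $e^{-(\mu+\lambda)a}/g(\theta(\cdot))$ are absolutely continuous on compact intervals, the standard AC version of integration by parts together with the linear-growth tail estimate handles it routinely.
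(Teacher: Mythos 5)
Your proposal is correct and is essentially the paper's own argument: the same integration by parts eliminating $\beta'$ produces the positive-kernel form (your $\Phi(s)/s$ with $s=\mu+\lambda$ is exactly the paper's $\xi_b(\lambda)$), the identity $\Phi(\mu)=\mu F'(b)$ is the paper's $\xi_b(0)=F'(b)$, and the dominance of the real root is obtained by the same triangle-inequality comparison, merely reorganized through the split $\Phi=sW+V$. Your extra care with the vanishing boundary term, the nondegeneracy $V>0$, and the monotonicity via $s\Phi'(s)-\Phi(s)<0$ are welcome refinements of details the paper leaves implicit, but they do not change the route.
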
 
\begin{proof} Note that 
$$
\lambda +(\mu - \beta(x_m))=  g\left( \frac{b}{\mu} \right)  \int_0^\infty \beta'\left( x_m + \int_0^a g\left( \frac{e^{-\mu\tau}b}{\mu} \right) d\tau \right)e^{-(\mu+\lambda)a}da = 
$$
$$
 g\left( \frac{b}{\mu} \right)  \int_0^\infty \frac{e^{-(\mu+\lambda)a}}{g\left( \frac{e^{-\mu a}b}{\mu} \right) }d\beta\left( x_m + \int_0^a g\left( \frac{e^{-\mu\tau}b}{\mu} \right) d\tau \right) = 
$$
$$
-\beta(x_m) -  g\left( \frac{b}{\mu} \right)  \int_0^\infty \beta\left( x_m + \int_0^a g\left( \frac{e^{-\mu\tau}b}{\mu} \right) d\tau \right) \left[ \frac{e^{-(\mu+\lambda)a}}{g\left( \frac{e^{-\mu a}b}{\mu} \right)}\right]'da. 
$$
Since 
$$
 \left[ \frac{e^{-(\mu+\lambda)a}}{g\left( \frac{e^{-\mu a}b}{\mu} \right)}\right]'= e^{-(\mu+\lambda)a}\frac{-(\mu+\lambda)g\left( \frac{e^{-\mu a}b}{\mu} \right) + {e^{-\mu a}b} g'\left( \frac{e^{-\mu a}b}{\mu} \right)}{\left(g\left( \frac{e^{-\mu a}b}{\mu} \right)\right)^2} <0, 
$$
we obtain the following equivalent form of the characteristic equation: 
$$
1 =  g\left( \frac{b}{\mu} \right)  \int_0^\infty \beta\left( x_m + \int_0^a g\left( \frac{e^{-\mu\tau}b}{\mu} \right) d\tau \right)  \frac{e^{-(\mu+\lambda)a}}{g\left( \frac{e^{-\mu a}b}{\mu} \right)}\left[1-\ \frac{{e^{-\mu a}b} g'\left( \frac{e^{-\mu a}b}{\mu} \right)}{(\mu+\lambda)g\left( \frac{e^{-\mu a}b}{\mu} \right)}\right]da.
$$
The right side of this equation (we will denote it by $\xi_b(\lambda)$, clearly 
$\xi_b(\lambda) = (\chi_b(\lambda)+\beta(x_m))/(\mu+\lambda)$ and $\xi_b(0)= (\chi_b(0) + \beta(x_m))/\mu = F'(b)$) is a strictly decreasing function of $\lambda$ on the interval $(-\mu, + \infty)$, $\xi_b(-\mu^{+})= +\infty$, $\xi_b(+\infty)=0$,  so that the equation  has exactly one real root $\lambda_0$ on this interval.
Furthermore, we see that $\lambda_0$ is a dominating eigenvalue. Indeed if $\lambda_j$ is another eigenvalue with $\Re \lambda_j \geq \lambda_0$ then 
$\lambda_0+\mu < |\lambda_j+\mu|$ and we get a contradiction: 
$$
1 \leq   g\left( \frac{b}{\mu} \right)  \int_0^\infty \beta\left( x_m + \int_0^a g\left( \frac{e^{-\mu\tau}b}{\mu} \right) d\tau \right)  \frac{e^{-(\mu+\Re\lambda_j)a}}{g\left( \frac{e^{-\mu a}b}{\mu} \right)}\left[1+\frac{{e^{-\mu a}b} |g'\left( \frac{e^{-\mu a}b}{\mu} \right)|}{|\mu+\lambda_j|g\left( \frac{e^{-\mu a}b}{\mu} \right)}\right]da <
$$
$$
g\left( \frac{b}{\mu} \right)  \int_0^\infty \beta\left( x_m + \int_0^a g\left( \frac{e^{-\mu\tau}b}{\mu} \right) d\tau \right)  \frac{e^{-(\mu+\lambda_0)a}}{g\left( \frac{e^{-\mu a}b}{\mu} \right)}\left[1+\ \frac{{e^{-\mu a}b} |g'\left( \frac{e^{-\mu a}b}{\mu}\right)|}{|\mu+\lambda_0|g\left( \frac{e^{-\mu a}b}{\mu} \right)}\right]da =1
$$
Finally, since $\xi_b(0)=F'(b)$ we conclude that $\lambda_0 < 0$ when $F'(b)  < 1$,  $\lambda_0 = 0$ when $F'(b)  = 1$ and $\lambda_0 >0$ when $F'(b) > 1$. \qed
\end{proof} 
Lemma \ref{L9} implies that, except for the critical case $F'(b)= 1$, the local stability/instability of an equilibrium $b$ in (\ref{RE}) is equivalent to the local stability/instability of the same equilibrium for one-dimensional system $b \to F(b)$:   

\begin{cor} \label{Cor5} Assume that functions $g: \R \to \R$, $\beta: \R_+ \to \R$ have a bounded and globally Lipschitzian first derivative and $\rho < \mu/5$.  Then an equilibrium $b$ of continuous semiflow 
\mbox{$\frak S: \frak K \times \R_+ \to \frak K $}  is  locally asymptotically stable if  $F'(b) <1$ and is unstable when $F'(b) >1$.  
\end{cor}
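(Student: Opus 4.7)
The plan is to deduce this as an instance of the principle of linearized stability for renewal equations with infinite delay, using Lemma~\ref{L9} to locate the relevant part of the spectrum of the linearization. The first step is to establish that, under the stated smoothness hypotheses on $\beta$ and $g$, the functional $\frak F: L^1_\rho(\R_-) \to \R$ is continuously Fr\'echet differentiable in a neighborhood of each constant $b \in \R_+$, with derivative given by the explicit expression for $\frak F'(b)$ computed just before Lemma~\ref{L8}. The weight $e^{\rho s}$ must be flat enough to absorb the extra factors produced by differentiating through the nested integrals defining $\frak F$; the numerical restriction $\rho < \mu/5$ provides precisely the margin that accommodates the kernel $e^{-\mu a}$, the iterated integrations, and the boundedness/Lipschitz factors coming from $\beta'$ and $g'$ needed to secure genuine $C^1$ regularity on the weighted space.

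Next I would lift the linearized equation $\psi(t) = \frak F'(b)\psi_t$ to a strongly continuous semigroup $T(t)$ on $L^1_\rho(\R_-)$ by translation of histories, and identify its infinitesimal generator $A$. The translation component has norm $e^{-\rho t}$ on the weighted space, so the essential spectrum of $A$ is confined to the half-plane $\{\Re z \leq -\rho\}$, while the spectrum in $\{\Re z > -\rho\}$ is purely point spectrum: $\lambda$ is an eigenvalue of $A$ precisely when $\lambda$ solves the characteristic equation preceding Lemma~\ref{L8}, the eigenfunction being $s \mapsto e^{\lambda s}$ (which lies in $L^1_\rho$ exactly when $\Re \lambda > -\rho$).

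With this identification, the corollary is immediate from Lemma~\ref{L9}. When $F'(b) < 1$, Lemma~\ref{L9} supplies a dominant real root $\lambda_0 < 0$ of the characteristic equation, and every other root in $\{\Re z \geq -\mu\} \supset \{\Re z > -\rho\}$ lies strictly to the left of $\lambda_0$; combined with the essential spectrum contained in $\{\Re z \leq -\rho\} \subset \{\Re z < 0\}$ (since $\rho > 0$), this places the entire spectrum of $A$ in some half-plane $\{\Re z \leq -\delta\}$ with $\delta > 0$, and the standard principle of linearized stability yields local exponential asymptotic stability of $b$ in $L^1_\rho$. When $F'(b) > 1$, Lemma~\ref{L9} produces $\lambda_0 > 0$, so $A$ has a nontrivial finite-dimensional unstable eigenspace corresponding to $\lambda_0$, and the unstable manifold theorem for $C^1$ semiflows renders $b$ unstable.

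The main technical obstacle is the first step: continuous Fr\'echet differentiability of $\frak F$ on the weighted space $L^1_\rho(\R_-)$ requires careful bookkeeping of how the three nested integrals in $\frak F$, together with the Lipschitz bounds on $\beta'$ and $g'$, interact with the weight $e^{\rho s}$, and it is precisely at this point that the sharp constraint $\rho < \mu/5$ is used. Once this regularity is in hand, the remaining input is essentially the spectral mapping theorem for essentially compact semigroups combined with the spectral information supplied by Lemma~\ref{L9}.
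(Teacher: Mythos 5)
You follow essentially the same route as the paper: secure continuous differentiability of ${\frak F}$ on $L^1_\rho(\R_{-})$, apply a principle of linearized stability for the infinite-delay renewal equation, and feed in the spectral picture of Lemma~\ref{L9} (dominant real root $\lambda_0$ with sign determined by $F'(b)-1$, essential spectrum pushed left of $-\rho$). The paper obtains both technical ingredients by citation --- the $C^1$ smoothness under $\rho<\mu/5$ is \cite[Theorem 3.1]{FBCD} and the stability/instability criterion is \cite[Theorem 3.3]{FBCD}, resting on \cite[Theorem 3.15]{DG} --- so the semigroup/essential-spectrum machinery you sketch, and the differentiability step you flag as the ``main technical obstacle'' but do not carry out, are exactly the content of those cited results rather than ideas beyond the paper's argument.
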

\begin{proof} As it was established in \cite[Theorem 3.1]{FBCD}, under  additional assumptions of this corollary,  the functional $ {\frak F}: L^1_\rho(\R_{-})
 \to \R$  is continuously differentiable with bounded derivative.  Then an application of  \cite[Theorem 3.3]{FBCD} (based on Theorem 3.15 in \cite{DG}) completes the proof.   \qed
\end{proof} 
\begin{remark} Note that the inequalities  $F'(b) <1$ and $F'(b) >1$ are equivalent, respectively, to the inequalities $R'(b) <0$ and $R'(b) >0$, where 
$$R'(b)=
-\frac{1}{b} -  \frac{1}{\mu b}g\left( \frac{b}{\mu} \right)  \int_0^\infty \beta\left( x_m + \int_0^a g\left( \frac{e^{-\mu\tau}b}{\mu} \right) d\tau \right) \left[ \frac{e^{-\mu a}}{g\left( \frac{e^{-\mu a}b}{\mu} \right)}\right]'da. 
$$
\end{remark}
\section{Numerical simulations and open problems}  \label{NSOP}
Stability analysis and discussion presented in the previous sections suggest 
the existence of a 'large'  (we believe, open dense)  set $\mathcal O$ in the phase space $\frak K $
 such that solutions with initial data from $\mathcal O$  converge to one of stable equilibria. Thus we can expect that a typical numerical solution of equation (\ref{RE}) is asymptotically constant independently on the monotonicity properties of $\beta$. To check this conclusion and to find out possible configurations for the set of equilibria for (\ref{RE}) with the unimodal fertility function $\beta$, we took $\beta(x) = \alpha xe^{-x}$ (which is used in the  Nicholson blowflies equation and other models, cf.  \cite{MM});  $x_m=0$, $\mu=1$, and $g(x)=pe^{-x}$, for some specific parameters $\alpha, p>0$.  With these choices, our computations exhibit certain diversity in  dynamical patterns for \eqref{RE}.
\begin{figure}[h]\hspace{-7mm}
	\begin{subfigure}{.54\textwidth}
	\includegraphics[width=\textwidth]{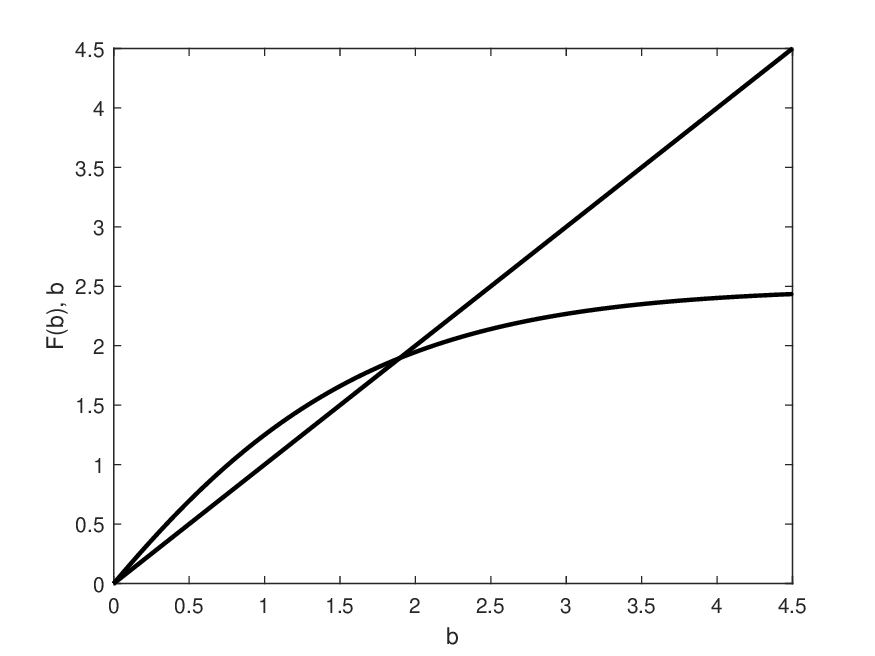}
	\caption{$\alpha=6$ and $p=1$}
	\end{subfigure} \hfill
	\begin{subfigure}{.54\textwidth}
	\includegraphics[width=\textwidth]{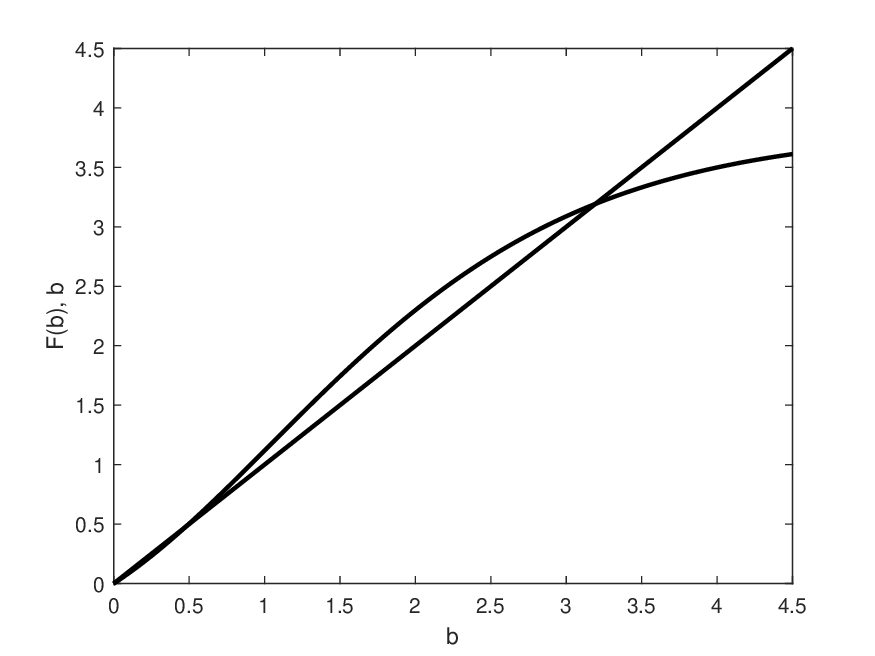}
	\caption{$\alpha=6$ and $p=5$}
	\end{subfigure}
	\caption{Graphics of $y=F(b)$ and $y=b$ for different pairs of parameters $\alpha$ and $p$}
	\label{Fig_F}
\end{figure}
\begin{figure}
	\begin{subfigure}{.45\textwidth}
	\includegraphics[width=\textwidth]{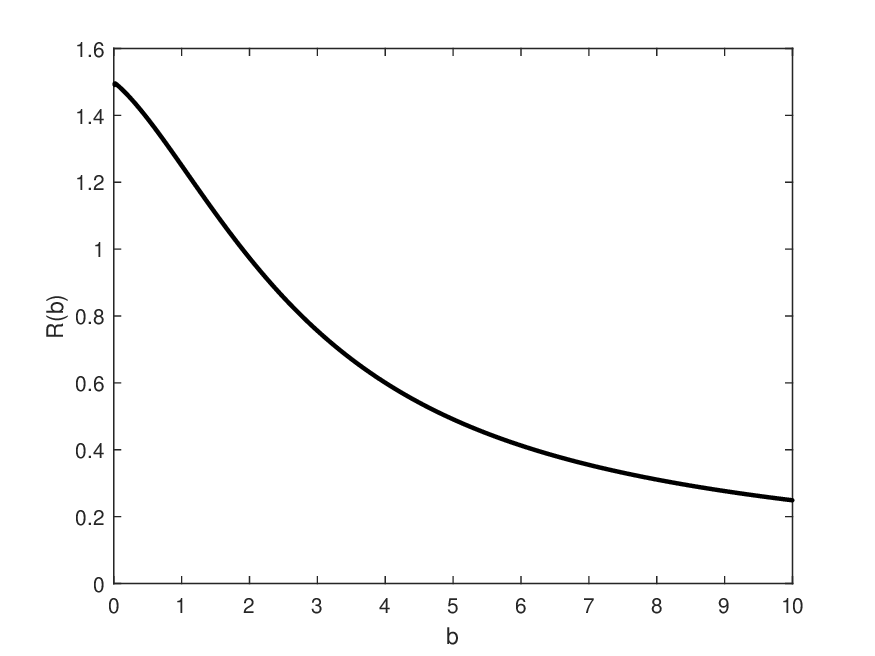}
	\caption{$\alpha=6$ and $p=1$}
	\end{subfigure} \hfill
	\begin{subfigure}{.45\textwidth}
	\includegraphics[width=\textwidth]{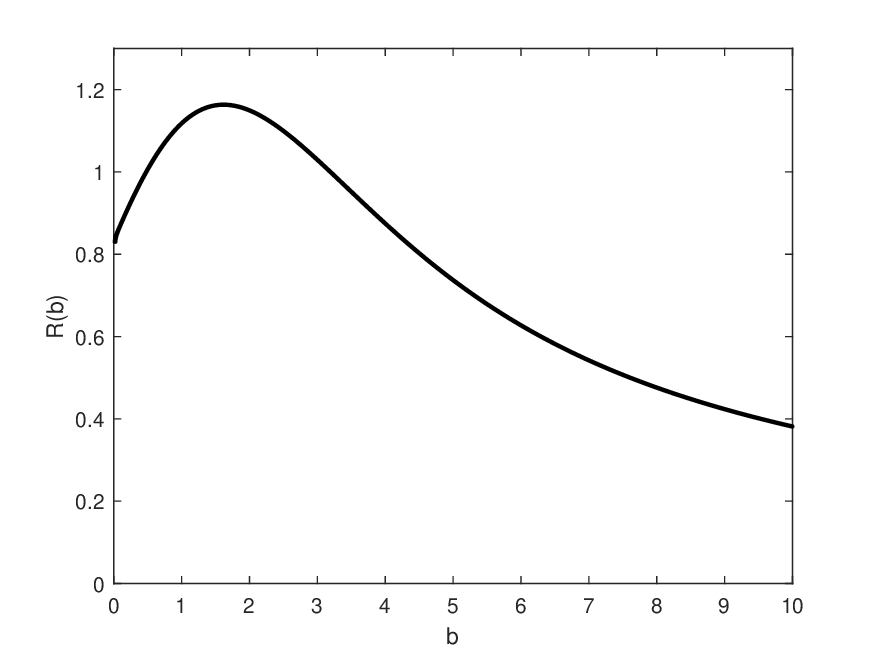}
	\caption{$\alpha=6$ and $p=5$}
	\end{subfigure}
	\caption{Graphics of $R$ for different pairs of parameters $\alpha$ and $p$}
	\label{Fig_R}
		\begin{subfigure}{.45\textwidth}
	\includegraphics[width=\textwidth]{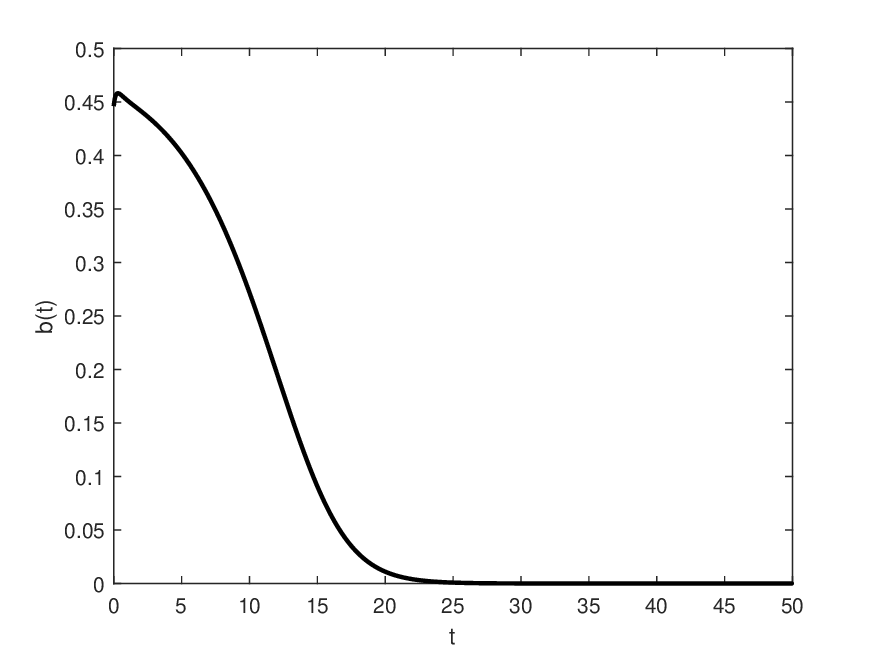}
	\caption{$\phi(a)=0.45$}
	\end{subfigure} \hfill
	\begin{subfigure}{.45\textwidth}
	\includegraphics[width=\textwidth]{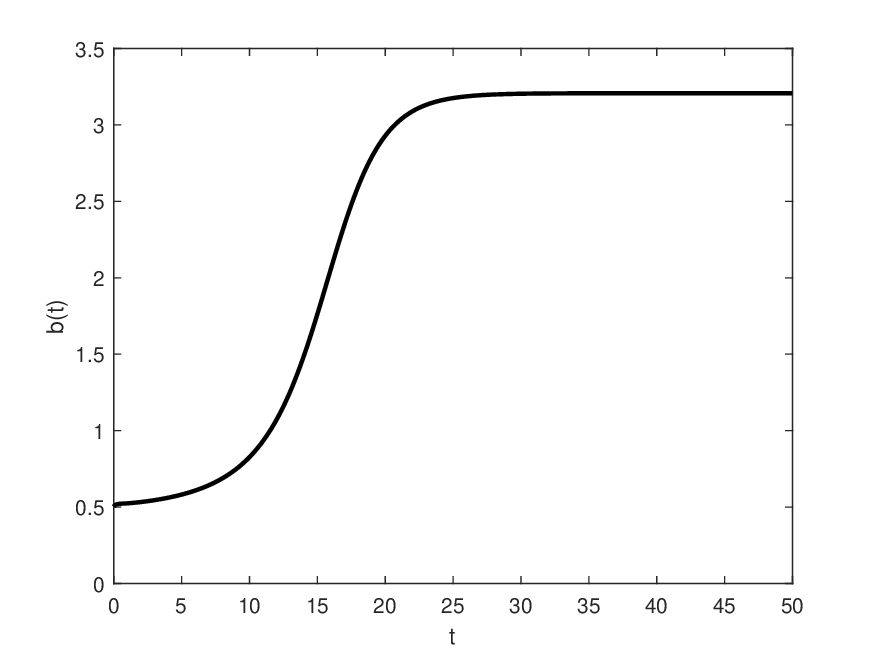}
	\caption{$\phi(a)=0.5$}
	\end{subfigure}
	\caption{Simulations of $b(t)$ using constant initial data $\phi$}
	\label{Fig_b1}
	\begin{subfigure}{.45\textwidth}
	\includegraphics[width=\textwidth]{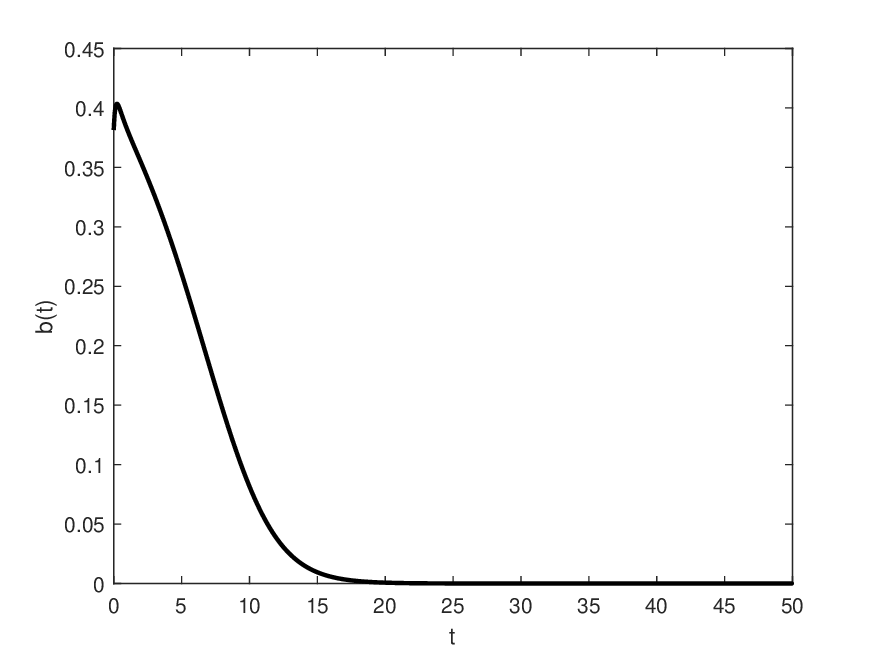}
	\caption{$b_* =0.475$, $\varepsilon=0.2$ and $\omega=1$}
	\end{subfigure} \hfill
	\begin{subfigure}{.45\textwidth}
	\includegraphics[width=\textwidth]{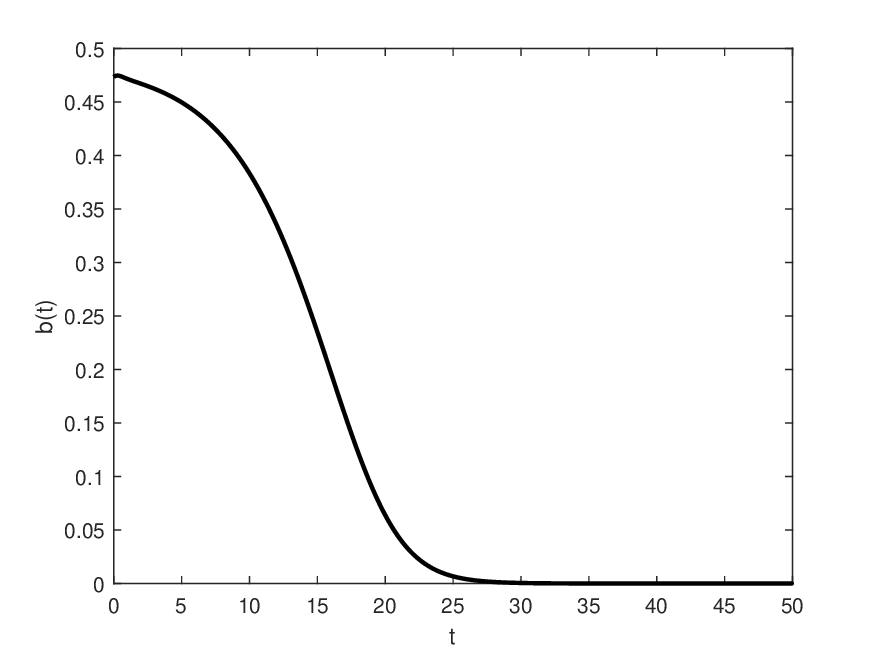}
	\caption{$b_* =0.475$, $\varepsilon=0.2$ and $\omega=30$}
	\end{subfigure}
	\caption{Simulations of $b(t)$ using periodic initial data of the form $\phi(a) = b_* + \varepsilon\sin(\omega a)$}
	\label{Fig_b2}
\end{figure}
First, in full accordance with Theorem \ref{Th1}, Figure \ref{Fig_F} shows that  the function $F$ is strictly increasing even in lack of monotonicity for $\beta$. Nevertheless, the function $R$ is not necessarily monotone and we can observe that for different values of $p$,  $R$ can be a decreasing function as well as  a unimodal one, see Figure \ref{Fig_R}. 
 It is important to mention that these shapes for $R$  do not change essentially even if instead of $g(x)=pe^{-x}$ we take $g$ with other orders of decay (e.g. polynomial) at $+\infty$. 

Interestingly,  regardless the fact $R(0) <1$, equation (\ref{RE})  with $\alpha=6$ and $p=5$ has three equilibria, $b_1=0 < b_2\approx{0.47}<b_3\approx 3.2$. Since $F'(0)=R(0)<1$, $F'(b_2) = 1+ b_2R'(b_2) >1$, $F'(b_3) = 1+ b_3R'(b_3) <1$, we deduce that equilibria $b_1=0$ and $b_3$ are locally asymptotically stable and $b_2$ is unstable (see Corollary \ref{Cor5}).  Our computations show that the linearization of  (\ref{RE})  at $b_2$ has only one (real) eigenvalue with non-negative real part for  $\alpha=6$, $p=5$, therefore we cannot invoke the Hopf bifurcation approach to prove the existence of a periodic orbit for (\ref{RE}).  
% Now, we are interested in stability properties around these equilibria, and also in the possible presence of non-trivial periodic solutions. In order to reach this aims, 
Simulating solutions $b(t)$ of \eqref{RE} for different initial data $\phi$, we took constant and periodic initial functions  close to the intermediate equilibrium $b_2$. Figures 3 and 4 show that these solutions are asymptotically constant. Clearly, the nonlinearity $F(b)$ is bistable here and we observe a kind of the Allee effect for the forest dynamics. 
% This is related to the existence of a purely imaginary root  $\lambda= i\omega$ of characteristic equation at $b_2$, which can be determined  from the following equation: 
%\begin{align*}
%	f(\omega) &:= \left( \mu - \beta(x_m) - g\left( \frac{b_1}{\mu} \right) \int_0^\infty \beta'\left( x_m + \int_0^a g\left( \frac{b_1}{\mu} e^{-\mu\tau} \right) d\tau \right) e^{-\mu a}\cos(\omega a) da \right)^2\\
%	& \quad + \left( \omega + g\left( \frac{b_1}{\mu} \right) \Big[ \int_0^\infty \beta'\left( x_m + \int_0^a g\left( \frac{b_1}{\mu} e^{-\mu\tau} \right) d\tau \right) e^{-\mu a}\sin(\omega a) da \right)^2 = 0.
%\end{align*}
%Our computations shows that when $\alpha=$ a pair of eigenvalues crosses the imaginary axis at $\omega=$ so that it is possible to use the Hopf bifurcation approach to prove the existence of an unstable periodic orbit for (\ref{RE}). 

%\vspace{2mm}

%We conclude this note formulating several {open questions} about the dynamics in (\ref{RE}): 

%1) How many positive equilibria can have (\ref{RE}) for $\beta(x) = \alpha xe^{-x}$? In general, for a general unimodal fertility rate $\beta$? 

%2) Suppose that (\ref{RE}) has a unique positive equilibrium. Should it be globally asymptotically stable?

%3) In the case of three equilibria (e.g. as on Figure 1b), describe the fine structure of the global attractor, cf. \cite{KW}. 
%Particularly, study the possibility of the Hopf bifurcation at $b_2$ and the existence and uniqueness of a non-trivial periodic solution for (\ref{RE}). 

\vspace{-3mm}

\section*{Acknowledgments}    \noindent   This research was supported in part by the projects FONDECYT 1231169 and AMSUD220002 (ANID, Chile). The first author was also supported by  ANID-Subdirecci\'on de Capital Humano/Doctorado Nacional/2024--21240616. 
 The second author gratefully acknowledges the hospitality and the support of the MATRIX Institute allowing him to participate in the Research Program "Delay Differential Equations and Their Applications" (Australia, Creswick,  12 --20 Dec 2023). 
\vspace{0mm}

\vspace{-3mm}


\begin{thebibliography}{99}
\bibitem{FBCD} C. Barril, \`A. Calsina, O. Diekmann, J. Z. Farkas,  On competition through growth reduction, e-print 	arXiv:2303.02981, 
https://doi.org/10.48550/arXiv.2303.02981

\bibitem{FBCD1}
{C.~Barril, A.~Calsina, O.~Diekmann,  J.~Z. Farkas}, {On
  hierarchical competition through reduction of individual growth}, Journal of
  Mathematical Biology, (2024) 88:66 https://doi.org/10.1007/s00285-024-02084-x

\bibitem{CO} M. A. Caraballo-Ortiz, E. Santiago-Valent\'in, T. A. Carlo, 
Flower number and distance to neighbours affect the fecundity of Goetzea elegans (Solanaceae)
Journal of Tropical Ecology, 27 (2011) 521--528. %, doi:10.1017/S0266467411000289



\bibitem{DG}  O. Diekmann, M. Gyllenberg, Equations with infinite delay: blending the abstract and the concrete, J.  Diff. Equations, 252 (2012), 819--851.


%\bibitem{H}
%{ D.~Henry}, { Geometric theory of semilinear parabolic equations},
 % Springer-Verlag, Berlin Heidelberg, 1st~ed., 1981.


\bibitem{HT}  F. Herrera, S. Trofimchuk, Dynamics of one-dimensional maps and Gurtin-MacCamy's population model. Part I: asymptotically constant solutions, Ukrainian Math. J.  75 (2023), 1635--1651 https://doi.org/10.3842/umzh.v75i12.7678

\bibitem{HTa}  F. Herrera, S. Trofimchuk, Global dynamics of a size-structured forest model, e-print 	arXiv:2401.08618, 
 https://doi.org/10.48550/arXiv.2401.08618

\bibitem{KWW} T. Krisztin, H.-O. Walther and J. Wu,  Shape, smoothness and invariant stratification of an attracting set for delayed monotone positive feedback, Fields Institute Monograph Series, Vol. 11, AMS, Providence, RI, 1999. 

\bibitem{KW} T. Krisztin, H.-O. Walther,  Unique periodic orbits for delayed positive feedback and the global attractor.
J. Dyn. Differ. Equ. 13, 1--57 (2001).

\bibitem{KV} T. Krisztin,  G. Vas,  The unstable set of a periodic orbit for delayed positive feedback, J. Dyn. Differ. Equ.  28,  (2016) 805--855.

\bibitem{MM}  Z. Ma, P. Magal, Global asymptotic stability for Gurtin-MacCamy's population dynamics model, Proceedings of the AMS,152 (2024), 765--780.

%\bibitem{MZh}  P. Magal, X.-Q. Zhao, Global attractors and steady states for uniformly persistent dynamical systems, SIAM J. Math. Anal. 37 (2005), 251--275.

\bibitem{MD} J.A.J Metz,  O. Diekmann, The dynamics of physiologically structured populations, Lecture Notes in Biomathematics, 68, 1986.


\bibitem{Smith}  H.L. Smith,  { Monotone dynamical systems,} Amer. Math. Soc.,  Providence, 1995.



%
%\bibitem{GW} G. F. Webb, Population models structured by age, size, and spatial position. Structured Population Models in Biology and Epidemiology, %Lecture Notes in Mathematics, vol. 1936. Springer-Verlag, Berlin/New York, pp. 1-49, 2008. 


\end{thebibliography}
\end{document}